\newtheorem{thm}{Theorem}[section]
\newtheorem{lem}{Lemma}[section]
\newtheorem{cor}{Corollary}[section]
\newtheorem{pro}{Proposition}[section]
\theoremstyle{definition}
\newtheorem{definition}{Definition}[section]
\def\rnum#1{\expandafter{\romannumeral #1}}
\def\Rnum#1{\uppercase\expandafter{\romannumeral #1}} 
\newcommand{\supp}{\mathop{\mathrm{supp}}}
\newcommand{\Sw}{\mathcal{S}}
\newcommand{\nrmp}[1]{||#1 ||_{L^p}}
\newcommand{\agl}[1]{\langle#1\rangle}
\newcommand{\Z}{\mathbb{Z}}
\newcommand{\rn}{\mathbb{R}^n}
\newcommand{\rne}{\mathbb{R}^{n-1}}
\newcommand{\T}{\mathbb{T}}
\newcommand{\dis}{\displaystyle}
\newcommand{\vep}{\varepsilon}
\newcommand{\lan}{\langle}
\newcommand{\ran}{\rangle}
\newcommand{\sq}{\square}
\begin{document}
\title{Trace operators on Wiener amalgam spaces}
\author{Jayson Cunanan}
\address{Department of Mathematical Sciences, 
Faculty of Science, 
Shinshu University, Asahi 3-1-1, 
Matsumoto, Nagano, 390-8621, 
Japan}
  \email{jcunanan@shinshu-u.ac.jp}
 
\author{Yohei Tsutsui}
\address{Department of Mathematical Sciences, 
Faculty of Science, 
Shinshu University, Asahi 3-1-1, 
Matsumoto, Nagano, 390-8621, 
Japan}
  \email{tsutsui@shinshu-u.ac.jp}
  
\subjclass[2010]{46E35}
\keywords{Wiener amalgam spaces, trace operators, maximal inequalities}
  
\maketitle

\begin{abstract}
The paper deals with trace operators of Wiener amalgam spaces using frequency-uniform decomposition operators and maximal inequalities, obtaining sharp results. Additionally, we provide the embeddings between standard and anisotropic Wiener amalgam spaces.
\end{abstract}
\section{Introduction}
The aim of this paper is to study the trace problem: What can be said about the trace operator $\mathbb{T},$
$$\mathbb{T}:f(x)\rightarrow f(\bar{x},0),\quad \bar{x}=(x_1,x_2,...,x_{n-1}),$$
as a mapping from $W^{p,q}_s(\mathbb{R}^n)$ to $W^{p,q}_s(\mathbb{R}^{n-1}).$ We note that for a tempered distribution $f$ defined on $\mathbb{R}^n, f(x,0)$ has no straightforward meaning and the question is how to define the trace for a class of tempered distributions. One can resort to the Schwartz function $\phi$, which has a pointwise trace $\phi(\bar{x},0)$. It can be extended to (quasi-)Banach function spaces which contain the Schwartz space $\mathcal{S}$ as a dense subspace.

Our setting is on Wiener amalgam spaces. These spaces, together with modulation spaces, were introduced by Feichtinger \cite{f1,f2,abl} in the 80's and are now widely used function spaces for various problems in PDE and harmonic analysis \cite{ben,rem,cun2,GC99,wang}. They resemble Triebel-Lizorkin spaces in the sense that we are taking $L^p(\ell^q)$ norms, but differ with the decomposition operator being used. Instead of the dyadic decomposition operators $\Delta_k\sim\mathcal{F}^{-1}\chi_{\{\xi:|\xi|\sim2^k\}}\mathcal{F}$ used for Triebel-Lizorkin spaces, Wiener amalgam spaces use frequency uniform decomposition operators $\square_k\sim\mathcal{F}^{-1}\chi_{Q_k}\mathcal{F}$, where $Q_k$ denotes a unit cube with center $k$ and $\cup_{k\in\mathbb{Z}^n}Q_k=\mathbb{R}^n.$

The concept of trace operator plays an important role in studying the existence and uniqueness of solutions to boundary value problems, that is, to partial differential equations with prescribed boundary conditions \cite{law,tri}. The trace operator makes it possible to extend the notion of restriction of a function to the boundary of its domain to "generalized" functions in various function spaces with regulariy. Now, we give a formal definition for the trace operators.

\begin{definition}
Let $X$ and $Y$ be quasi-Banach function spaces defined on $\rn$ and $\rne$, respectively. Assume that the Schwartz class $\mathcal{S}$ is dense in $X.$ Denote
$$\mathbb{T}:f(x)\rightarrow f(\bar{x},0),\quad f\in\mathcal{S}.$$
Assuming that there exist a constant $C>0$ such that
$$||\T f||_Y\leq C||f||_X,\quad \forall f\in\mathcal{S},$$
one can extend $\T:X\rightarrow Y$ by the density of $\Sw$ in $X$ and we write $f(\bar{x},0)=\T f,$ which is said to be the trace of $f\in X$. Moreover, if there exist a continuous linear operator $\T^{-1}:Y\rightarrow X$ such that $\T\T^{-1}$ is the identity operator on Y, then $\T$ is said to be a trace-retraction from $X$ onto $Y.$
\end{definition}
For ($\alpha$-) modulation spaces, Besov spaces and Tribel-Lizorkin spaces, trace theorems have been extensively studied \cite{fwh,trimod,tri}. Feichtinger, Huang and Wang \cite{fwh} considered the trace theorems on anisotropic modulation spaces $M^{p,q,r}_s$ with $0< p,q,r< \infty,s\in\mathbb{R}$ and they obtained $\T M_s^{p,q,p\wedge q\wedge1}(\rn)=M^{p,q}_s(\rne)$. In \cite{fra,tri2}, we find that for $0<p,q\leq\infty,$ and $s-1/p>(n-1)(1/p-1)$, we have $\T B_s^{p,q}(\rn)=B^{p,q}_{s-1/p}(\rne)$ and $\T F_s^{p,q}(\rn)=F^{p,p}_{s-1/p}(\rne)$ (the case $F^{\infty,q}$ is ommited). The use of atoms as a framework in studying trace problems can be found in \cite{tri2} and the references within.

Our main results are the following.
\begin{thm}
Let $n\geq2, 0<p,q< \infty,s\in\mathbb{R}.$ Then
$$\mathbb{T}:f(x)\rightarrow f(\bar{x},0),\quad \bar{x}=(x_1,x_2,...,x_{n-1})$$
is a trace-retraction from $W^{p,q,1\wedge q}_s(\mathbb{R}^n)$ to $W^{p,q}_s(\mathbb{R}^{n-1}).$
\end{thm}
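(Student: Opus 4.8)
Our plan is to verify the two properties defining a trace-retraction: the a priori inequality $\|\T f\|_{W^{p,q}_s(\rne)}\lesssim\|f\|_{W^{p,q,1\wedge q}_s(\rn)}$ for all $f\in\Sw$, so that $\T$ extends by density ($\Sw$ being dense in $W^{p,q,1\wedge q}_s(\rn)$ since $p,q<\infty$), together with a bounded linear $\T^{-1}:W^{p,q}_s(\rne)\to W^{p,q,1\wedge q}_s(\rn)$ satisfying $\T\T^{-1}=\mathrm{id}$. Throughout we write $x=(\bar x,x_n)$, $k=(\bar k,k_n)\in\Z^{n-1}\times\Z$, use a smooth realization of $\{\square_k\}$ (equivalent norms) so that each $\square_k f$ is band-limited to a fixed dilate of the unit cube centered at $k$, and fix an auxiliary exponent $r$ with $0<r<\min(p,q,1)$, which makes the maximal inequalities below available; for band-limited $g$ we let $g^{*}_r(x)=\sup_{z\in\rn}|g(x-z)|\,\agl{z}^{-n/r}$ be its Peetre maximal function.

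\emph{A priori bound: reduction to a block estimate.} The starting observation is that the trace localizes the tangential frequencies: from $\widehat{\T g}(\bar\xi)=\int_{\mathbb{R}}\widehat g(\bar\xi,\xi_n)\,d\xi_n$ one sees that $\T(\square_{(\bar k,k_n)}f)$ is band-limited in $\rne$ to a dilate of the unit cube at $\bar k$, so $\square_{\bar j}\T(\square_{(\bar k,k_n)}f)=0$ unless $\bar k\sim\bar j$ (i.e. $|\bar j-\bar k|\le C_0$), and $\square_{\bar j}\T f=\sum_{\bar k\sim\bar j}\sum_{k_n}\square_{\bar j}\T(\square_{(\bar k,k_n)}f)$. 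For a single block, $\square_{\bar j}\T(\square_{(\bar k,k_n)}f)=\T G$, where $G$ arises from $\square_{(\bar k,k_n)}f$ by applying the frequency cut-off of $\square_{\bar j}$ in the $\bar x$-variables only; $G$ is band-limited in $\rn$ to a box of fixed size, and estimating a trace of such a function pointwise by its Peetre maximal function, together with the comparison $G^{*}_r\lesssim(\square_{(\bar k,k_n)}f)^{*}_r$ (a convolution in $\bar x$ against a Schwartz kernel on $\rne$), gives
\[
|\square_{\bar j}\T(\square_{(\bar k,k_n)}f)(\bar x)|\le G^{*}_r(\bar x,0)\lesssim(\square_{(\bar k,k_n)}f)^{*}_r(\bar x,0)
\]
uniformly in $\bar j,\bar k,k_n$. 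Summing over $k_n$ through $\|a\|_{\ell^1}\le\|a\|_{\ell^{1\wedge q}}$, then over the $O(1)$ indices $\bar k\sim\bar j$ and in $\ell^q_{\bar j}$ (where $\agl{\bar j}^s\approx\agl{\bar k}^s$), one obtains
\[
\Bigl(\sum_{\bar j}\agl{\bar j}^{sq}|\square_{\bar j}\T f(\bar x)|^q\Bigr)^{1/q}\lesssim\Theta(\bar x,0),\quad \Theta(x):=\Bigl\|\bigl(\agl{\bar k}^s(\square_{(\bar k,k_n)}f)^{*}_r(x)\bigr)_{\bar k,k_n}\Bigr\|_{\ell^q_{\bar k}(\ell^{1\wedge q}_{k_n})}.
\]

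\emph{A priori bound: recovering the lost integration.} The decisive point is that $\Theta$ varies slowly in the normal direction: since each $(\square_{(\bar k,k_n)}f)^{*}_r$ is the Peetre maximal function of a function with Fourier support of diameter $O(1)$, it obeys $(\square_{(\bar k,k_n)}f)^{*}_r(\bar x,0)\lesssim(\square_{(\bar k,k_n)}f)^{*}_r(\bar x,t)$ for $|t|\le1$, and this passes through the $\ell^q(\ell^{1\wedge q})$ norms, so $\Theta(\bar x,0)\lesssim\Theta(\bar x,t)$ for $|t|\le1$; hence $\Theta(\bar x,0)^p\lesssim\int_{\mathbb{R}}\Theta(\bar x,t)^p\,dt$ and therefore $\|\Theta(\cdot,0)\|_{L^p(\rne)}\lesssim\|\Theta\|_{L^p(\rn)}$. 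Finally, $g^{*}_r\lesssim(M|g|^r)^{1/r}$ combined with the iterated Fefferman--Stein vector-valued maximal inequality on $L^{p/r}\bigl(\ell^{q/r}(\ell^{(1\wedge q)/r})\bigr)$ — legitimate exactly because $p/r,q/r,(1\wedge q)/r>1$ — gives
\[
\|\Theta\|_{L^p(\rn)}\lesssim\Bigl\|\bigl(\agl{\bar k}^s\square_{(\bar k,k_n)}f\bigr)_{\bar k,k_n}\Bigr\|_{L^p(\ell^q_{\bar k}(\ell^{1\wedge q}_{k_n}))}=\|f\|_{W^{p,q,1\wedge q}_s(\rn)},
\]
completing the a priori estimate.

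\emph{The right inverse, and the main obstacle.} For $\T^{-1}$, choose $\psi\in C_c^\infty(\mathbb{R})$ equal to $1$ near $0$, normalized so that $\psi^{\vee}(0)=1$, and set $\T^{-1}g:=g\otimes\psi^{\vee}$; then $\T\T^{-1}g=g$, while $\widehat{\T^{-1}g}$ is supported in $\rne\times[-1,1]$, so only $O(1)$ values of $k_n$ contribute, with $\square_{(\bar k,k_n)}\T^{-1}g=(\square_{\bar k}g)\otimes\eta_{k_n}$ for fixed $\eta_{k_n}\in\Sw(\mathbb{R})$; hence $\|\T^{-1}g\|_{W^{p,q,1\wedge q}_s(\rn)}$ factors as a fixed constant times $\|g\|_{W^{p,q}_s(\rne)}$, so $\T^{-1}$ is bounded, and extending both operators by density concludes the argument. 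I expect the frequency localization and the construction of $\T^{-1}$ to be routine; the delicate part is the block estimate together with the two maximal inequalities. One must keep function-valued (pointwise) estimates all the way to the end rather than taking $L^p$-norms blockwise, which is too lossy for $p<1$; one must exploit the slow variation of the Peetre maximal function in $x_n$ to restore the integration that the trace destroys; and one must finish with the iterated Fefferman--Stein inequality, which is precisely where the Wiener-amalgam ordering $L^p(\ell^q)$ — as opposed to the modulation-space ordering $\ell^q(L^p)$ — has to be respected, and which forces the constraint $r<\min(p,q,1)$.
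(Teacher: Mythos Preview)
Your argument is essentially the paper's: frequency localization of $\T$ in the tangential variables, pointwise control of $\sq_k f(\bar x,0)$ by the Peetre maximal function $\sq_k^* f(\bar x,x_n)$ for $|x_n|\le 1$, integration in $x_n$ over $[0,1]$ to restore the $L^p(\rn)$-norm, and then a vector-valued maximal inequality (the paper packages this as Proposition~2.1, based on Triebel's Lemma~2.2, while you invoke an iterated Fefferman--Stein estimate); the right inverse $\T^{-1}g=g\otimes\psi^\vee$ is identical. The paper splits into $0<q<1$ (quasi-triangle inequality, landing directly in $\ell^q_{k_n}(\ell^q_{\bar k})$) and $1\le q<\infty$ (Minkowski, landing directly in $\ell^1_{k_n}(\ell^q_{\bar k})$), whereas you handle both at once via $\|a\|_{\ell^1}\le\|a\|_{\ell^{1\wedge q}}$.

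One small slip to fix: by summing first in $k_n$ and then in $\bar k$, your $\Theta$ has the ordering $\ell^q_{\bar k}(\ell^{1\wedge q}_{k_n})$, whereas $W^{p,q,1\wedge q}_s$ is defined in the paper with the opposite nesting $\ell^{1\wedge q}_{k_n}(\ell^q_{\bar k})$; your final ``$=$'' should be ``$\le$'', which follows from Minkowski's inequality since $1\wedge q\le q$. With that, the proof is correct.
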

In view of  the embedding in Theorem 2.1 (II-ii),  we immediately have the following corollary.   
\begin{cor}
Let $n\geq2, 0<p,q< \infty,s\geq0.$ Then for any $\epsilon>0$
$$\mathbb{T}:W^{p,q}_{s+\frac{1}{1\wedge q}-\frac{1}{q}+\epsilon}(\mathbb{R}^n)\rightarrow W^{p,q}_{s}(\mathbb{R}^{n-1}).$$
\end{cor}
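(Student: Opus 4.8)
The plan is to obtain the corollary by composing the mapping property of Theorem 1.1 with the embedding recorded in Theorem 2.1 (II-ii); no new estimate is needed. Theorem 2.1 (II-ii) supplies, for $s\ge0$ and every $\epsilon>0$, the continuous inclusion
$$W^{p,q}_{s+\frac{1}{1\wedge q}-\frac{1}{q}+\epsilon}(\mathbb{R}^n)\hookrightarrow W^{p,q,1\wedge q}_s(\mathbb{R}^n),$$
while Theorem 1.1 gives that $\mathbb{T}\colon W^{p,q,1\wedge q}_s(\mathbb{R}^n)\to W^{p,q}_s(\mathbb{R}^{n-1})$ is bounded (indeed a trace-retraction). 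Since $0<p,q<\infty$, the Schwartz class is dense in $W^{p,q}_{s+\frac{1}{1\wedge q}-\frac{1}{q}+\epsilon}(\mathbb{R}^n)$, so the composite of the inclusion with $\mathbb{T}$ is precisely the continuous extension of $\phi\mapsto\phi(\bar{x},0)$, and we conclude $\|\mathbb{T}f\|_{W^{p,q}_s(\mathbb{R}^{n-1})}\lesssim\|f\|_{W^{p,q}_{s+\frac{1}{1\wedge q}-\frac{1}{q}+\epsilon}(\mathbb{R}^n)}$ for all $f$ in that space. Note that we only assert boundedness, not a trace-retraction, so no right inverse $\mathbb{T}^{-1}$ has to be produced at this level of smoothness.

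For completeness I would also recall why the embedding holds, as this is the only substantive ingredient. Writing the anisotropic norm as an $\ell^{1\wedge q}$-sum in the distinguished frequency coordinate $k_n\in\Z$ of the (already $\ell^q$-in-$k'$, $L^p$-in-$x$) quantities, one just needs to pass from $\ell^q(\Z)$ to $\ell^{1\wedge q}(\Z)$ in that single variable. If $q<1$ this is the trivial inclusion $\ell^q\subset\ell^q$ and the gain in regularity is only $\epsilon$; if $q\ge1$ one applies Hölder's inequality with exponents $q/(1\wedge q)=q$ and $q'$, which costs a factor $\lan k_n\ran^{\delta}$ with any $\delta>1-\tfrac1q=\tfrac1{1\wedge q}-\tfrac1q$, the strict excess being taken up by $\epsilon$. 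The hypothesis $s\ge0$ enters exactly here: it guarantees that the gained weight $\lan k_n\ran^{\delta}$ and the $k'$-weight $\lan k'\ran^{s}$ are together controlled by the isotropic weight $\lan(k',k_n)\ran^{s+\delta}$, since $(\lan k'\ran+\lan k_n\ran)^{s+\delta}\gtrsim\lan k'\ran^{s}\lan k_n\ran^{\delta}$ when $s,\delta\ge0$.

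Since everything has been reduced to Theorems 1.1 and 2.1 (II-ii), there is essentially no obstacle remaining; the only points demanding a moment's care are that the exponent $\delta=\tfrac1{1\wedge q}-\tfrac1q+\epsilon$ be admissible in the Hölder step (strict inequality past the critical value, which fails for $\epsilon=0$, explaining the loss) and that the weight bookkeeping genuinely uses $s\ge0$. Both are routine, so the corollary indeed follows immediately.
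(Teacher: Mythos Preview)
Your proposal is correct and follows precisely the paper's approach: the corollary is stated there as an immediate consequence of Theorem~1.1 combined with the embedding in Theorem~2.1~(II-ii), with no further argument given. Your additional sketch of the embedding (including the Hölder step in $k_n$ and the role of $s\ge0$ in the weight bookkeeping) accurately reproduces the paper's own proof of Theorem~2.1~(II-ii).
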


We remark that our result shows independence of $p.$ This is due the pointwise estimates we were able to prove in Section 3. An interesting observation is that, the trace theorem of Triebel-Lizorkin spaces stated above, shows independence in $q.$ This difference might be due to the decomposition operators used in the norm of each function spaces.

The paper is organised as follows: In Section 2, the embeddings between standard and anisotropic Wiener amalgam spaces are given. We also define notations, function spaces and some Lemmas to be used throughout this paper. In Section 3, we prove our main result, Theorem 1.1  and the sharpness of Corollary 1.1.
\section{Preliminaries}
\textbf{Notations.} The Schwartz class of test functions on $\mathbb{R}^n$ shall be denoted by $\mathcal{S}:=\mathcal{S}(\mathbb{R}^n)$ and its dual, the space of tempered distributions, by $\mathcal{S}':=\mathcal{S}'(\mathbb{R}^n)$. The $L^p(\mathbb{R}^n)$ norm is given by $\nrmp{f}=(\int_{\mathbb{R}^n} |f(x)|^p\ dx)^{1/p} $ whenever $1\leq p<\infty,$ and $||f||_{L^{\infty}}=\text{ess.sup}_{x\in\mathbb{R}^n}|f(x)|$. The Fourier transform of a function $f\in \mathcal{S}(\mathbb{R}^n)$ is given by $$\mathcal{F}f(\xi)=\hat{f}(\xi)=\int_{\mathbb{R}^n}e^{-i2\pi x\cdot\xi}f(x)\ dx$$ which is an isomorphism of the Schwartz space $\mathcal{S}(\mathbb{R}^n)$ onto itself that extends to the tempered distributions $\mathcal{S'}(\mathbb{R}^n)$ by duality. The inverse Fourier transform is given by $\mathcal{F}^{-1}f(x)=\check{f}(x)=\int_{\mathbb{R}^n}e^{i2\pi \xi\cdot x}f(\xi)\ d\xi$. Given $1\leq p\leq\infty,$ we denote by $p'$ the conjugate exponent of $p$ (i.e. $1/p+1/p'=1$). We use the notation $u\lesssim v$ to denote $u\leq cv$ for a positive constant $c$ independent of $u$ and $v$. We write $a\wedge b:=\min(a,b)$ and $a\vee b:=\max(a,b)$. We now define the function spaces in this paper.

Let $\eta:\mathbb{R}\rightarrow[0,1]$ be a smooth bump function satisfying 
\begin{equation*}\label{eta}
\eta(\xi):=\begin{cases}
      1, & |\xi|\leq1 \\
       smooth, & 1<|\xi|\leq 2\\
      0, & |\xi|\geq2.
    \end{cases}
\end{equation*}
We write for $k=(k_1,...,k_n)$ and $\xi=(\xi_1,...,\xi_n),$
    $$\phi_{k_i}=\eta(2(\xi_i-k_i)).$$ 
 Put 
 \begin{equation}\label{vph}
\varphi_k(\xi)=\dfrac{\phi_{k_1}(\xi_1)...\phi_{k_n}(\xi_n)}{\sum_{k\in\mathbb{Z}^n}\phi_{k_1}(\xi_1)...\phi_{k_n}(\xi_n)},\quad k\in\mathbb{Z}^n.
\end{equation}

\begin{definition}[\textit{Wiener amalgam spaces}]
For $ 0< p,q\leq \infty,$ and $ s\in\mathbb{R}$, the Wiener amalgam space $ W^{p,q}_s $ consists of all tempered distributions $f\in \mathcal{S}'$ for which the following is finite:
\begin{equation}\label{qua}
||f||_{W^{p,q}_s}=||\ ||\{\agl{k}^{s}\square_kf\}||_{\ell^q}||_{L^p},
\end{equation}
with
$\square_kf={\mathcal F}^{-1}( \varphi_k\widehat{f})$.
\end{definition}
We note that (\ref{qua}) is a quasi-norm if $ 0< p,q\leq \infty,$ and norm if $1\leq p,q\leq\infty$. Moreover, (\ref{qua}) is independent of the choice of $\varphi=\{ \varphi_k\}_{k\in\Z^n}$. We refer the reader to \cite{f1,f2,f3} for equivalent definitions (continuous versions).

We write $\bar{x}=(x_1,x_2,...,x_{n-1})$  and define the anisotropic Wiener amalgam spaces $W^{p,q,r}$ by the following norm,

$$||f||_{W^{p,q,r}_s(\mathbb{R}^n)}=||(\sum_{k_n\in\mathbb{Z}}(\sum_{\bar{k}\in\mathbb{Z}^{n-1}}\agl{\bar{k}}^{sq}|\square_kf|^q)^{r/q})^{1/r}||_{L^p(\mathbb{R}^n)}.$$

Similarly, for $\bar{\bar{x}}=(x_1,x_2,...,x_{n-2}),$ we define 

$$||f||_{W^{p,q,r,r}_s(\mathbb{R}^n)}=||(\sum_{(k_{n-1}k_n)\in\mathbb{Z}^2}(\sum_{\bar{\bar{k}}\in\mathbb{Z}^{n-2}}\agl{\bar{\bar{k}}}^{sq}|\square_kf|^q)^{r/q})^{1/r}||_{L^p(\mathbb{R}^n)}.$$

Comparing amalgam spaces $W_s^{p,q}$ with anisotropic amalgam spaces $W_s^{p,q,r}$ we see that $W_s^{p,q}$ is, but $W_s^{p,q,r}$ is not rotational invariant. Using the almost orthogonality of $\varphi$ we see that the $W_s^{p,q,r}$ is independent of $\varphi$. Moreover, recalling that $||f||_{W_s^{p,q,r}}$ is the function
sequence $\{\square_k f \}_{k\in\mathbb{Z}^n}$ equipped with the $L^p\ell^r_{k_n}\ell^q_{\bar{k}}$ norm, it is easy to see that $W_s^{p,q,r}$ is a quasi-Banach space for any $s\in\mathbb{R},
p,q,r \in (0,\infty]$ and a Banach space for any $s\in\mathbb{R},
1\leq p,q,r\leq \infty.$ Moreover, the Schwartz space is dense in $W_s^{p,q,r}$ if $p, q, r < \infty.$ The proofs are similar to those of amalgam spaces in \cite{f1,f2,f3}. 


We collect properties of Wiener amalgam spaces in the following lemma. 
\begin{lem}
Let $p,q,p_i.q_i\in[1,\infty]$ for $i=1,2$ and $s_j\in\mathbb{R}$ for $\ j=1,2.$ Then

\begin{enumerate}
\item $\mathcal{S}(\mathbb{R}^n)\hookrightarrow W^{p,q}(\mathbb{R}^n)\hookrightarrow \mathcal{S'}(\mathbb{R}^n);$
\item $\mathcal{S}$ is dense in $W^{p,q}$ if $p$ and $q<\infty;$
\item If $q_1\leq q_2$ and $p_1\leq p_2$, then $W^{p_1,q_1}\hookrightarrow W^{p_2,q_2};$
\item If $s_1\geq s_2$, then $W^{p,q}_{s_1}\hookrightarrow W^{p,q}_{s_2};$
\item (Complex interpolation) For $0<\theta<1.$ Let $\dfrac{1}{p}=\dfrac{\theta}{p_1}+\dfrac{1-\theta}{p_2}$, $\dfrac{1}{q}=\dfrac{\theta}{q_1}+\dfrac{1-\theta}{q_2}$ and $s=\theta s_1+ (1-\theta)s_2.$ Then
$$[{W}^{p_1,q_1}_{s_1}, {W}^{p_2,q_2}_{s_2}]_{[\theta]}={W}^{p,q}_s.$$
\end{enumerate}
\end{lem}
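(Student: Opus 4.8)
The plan is to deduce every item from the corresponding statement for the scalar sequence space $L^p(\ell^q_s)$, where $\ell^q_s$ carries the weight $\agl{k}^s$; the only structural inputs are that each $\square_k f$ is band-limited to a fixed dilate of $Q_k$ and that $\{\vphi_k\}$ has the finite-overlap property. First I would fix a smooth bump $\psi$ with $\psi\equiv1$ on $[-1,1]$ and compact support, set $\psi_k(\xi)=\prod_{i=1}^n\psi(\xi_i-k_i)$ so that $\psi_k\vphi_k=\vphi_k$, and record the reproducing identity $\square_k f=\mathcal F^{-1}\psi_k*\square_k f$; the crucial point is that $|\mathcal F^{-1}\psi_k|=|\mathcal F^{-1}\psi_0|=:\rho$ is a fixed Schwartz function independent of $k$ (frequency translation becomes spatial modulation). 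I would also fix a thickened multiplier $\widetilde\vphi_k$ with $\widetilde\vphi_k\vphi_k=\vphi_k$. Throughout, the hypotheses $p,q\in[1,\infty]$ keep us in the Banach range, so $L^{p'}$-duality and complex interpolation are available.

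\textbf{Items (1) and (2).} For $f\in\Sw$ the quantity $\agl{k}^N\|\square_k f\|_{L^p}$ is controlled by a Schwartz seminorm of $f$ for every $N$, since $\vphi_k\widehat f$ is concentrated near $k$; a rapidly decaying sequence lies in $\ell^q_s$, giving $\Sw\hookrightarrow W^{p,q}$. Dually, $\|\square_k\phi\|_{L^{p'}}$ decays rapidly in $k$ for $\phi\in\Sw$, while $\agl{k}^{s}\|\square_k f\|_{L^p}\lesssim\|f\|_{W^{p,q}_s}$ uniformly in $k$; pairing these two sequences through the reconstruction $\phi=\sum_k\square_k\phi$ (convergent in $\Sw$, using $\sum_k\vphi_k\equiv1$), after moving a thickened factor onto $f$, gives $|\langle f,\phi\rangle|\lesssim\|f\|_{W^{p,q}}\|\phi\|_{\Sw}$, i.e. $W^{p,q}\hookrightarrow\Sw'$. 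For density when $p,q<\infty$ I would first show $f_N:=\sum_{|k|\le N}\square_k f\to f$ in $W^{p,q}$ by dominated convergence --- the integrand $\big(\sum_{|k|>N}\agl{k}^{sq}|\square_k f|^q\big)^{p/q}$ tends to $0$ a.e. and is dominated by an $L^1$ function --- and then approximate the band-limited $f_N\in L^p$ by Schwartz functions through a spatial cut-off and mollification, which is routine for band-limited distributions.

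\textbf{Items (3) and (4).} Pointwise in $x$ one has $\|\cdot\|_{\ell^{q_2}}\le\|\cdot\|_{\ell^{q_1}}$ for $q_1\le q_2$ and $\agl{k}^{s_1}\ge\agl{k}^{s_2}$ for $s_1\ge s_2$ (as $\agl{k}\ge1$); taking $L^p$ gives item (4) and the $\ell^q$-half of item (3). For the $L^p$-half, put $G:=\big(\sum_k|\square_k f|^q\big)^{1/q}$. From $|\square_k f|\le\rho*|\square_k f|$ and Minkowski's integral inequality applied in $\ell^q$ with respect to the measure $\rho(y)\,dy$, one gets $G\le\rho*G$ pointwise, and Young's inequality with $1/r=1+1/p_2-1/p_1\in[0,1]$ (so $r\in[1,\infty]$ and $\rho\in L^r$) yields $\|f\|_{W^{p_2,q}}\le\|\rho\|_{L^r}\|G\|_{L^{p_1}}=\|\rho\|_{L^r}\|f\|_{W^{p_1,q}}$. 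The two halves combine to give $W^{p_1,q_1}\hookrightarrow W^{p_2,q_2}$.

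\textbf{Item (5), and the main obstacle.} I would realize $W^{p,q}_s$ as a retract of $L^p(\ell^q_s)$: the analysis map $S:f\mapsto(\square_k f)_k$ is an isometry $W^{p,q}_s\to L^p(\ell^q_s)$, the synthesis map $R:(g_k)_k\mapsto\sum_k\mathcal F^{-1}(\widetilde\vphi_k\widehat{g_k})$ is bounded $L^p(\ell^q_s)\to W^{p,q}_s$ by a vector-valued convolution estimate of the same type as in (3) (the finite-overlap property reduces matters to finitely many shifts, and $|\mathcal F^{-1}(\vphi_j\widetilde\vphi_k)|$ is dominated by a fixed Schwartz function for $|j-k|$ bounded), and $RS=\mathrm{id}$ since $\widetilde\vphi_k\vphi_k=\vphi_k$ and $\sum_k\vphi_k\equiv1$. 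As $R$ and $S$ are bounded simultaneously on the couples $(W^{p_1,q_1}_{s_1},W^{p_2,q_2}_{s_2})$ and $(L^{p_1}(\ell^{q_1}_{s_1}),L^{p_2}(\ell^{q_2}_{s_2}))$ and complex interpolation commutes with retracts, item (5) reduces to the classical identity $[L^{p_1}(\ell^{q_1}_{s_1}),L^{p_2}(\ell^{q_2}_{s_2})]_{[\theta]}=L^p(\ell^q_s)$. The steps that need more than pointwise monotonicity --- and hence the places to be careful --- are the $L^{p_1}\to L^{p_2}$ direction of (3), which genuinely relies on the reproducing identity and the $k$-independence of $|\mathcal F^{-1}\psi_k|$, and the verification in (5) that the retraction pair is bounded on the whole interpolation couple so that the retract principle applies; the rest is a transcription of known facts about $L^p(\ell^q_s)$.
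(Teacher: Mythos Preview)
The paper does not prove this lemma at all: immediately after the statement it simply writes ``The proofs of these statements can be found in \cite{f1,abl,f3,tof2}.'' So there is nothing in the paper to compare your argument against line by line.

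That said, your sketch is correct and follows the standard route one finds in those references. A few comments on the individual steps: your treatment of (4) and of the $\ell^q$-half of (3) is the obvious monotonicity; your argument for the $L^p$-half of (3) via the reproducing convolution $|\square_k f|\le\rho*|\square_k f|$, Minkowski in $\ell^q$, and Young's inequality is exactly how one proves the Bernstein-type embedding $L^{p_1}\hookrightarrow L^{p_2}$ for band-limited functions uniformly in $k$ (this is sometimes called Nikol'skii's inequality in the literature, and is the mechanism behind the corresponding result for modulation spaces in Feichtinger's papers); your retract-of-$L^p(\ell^q_s)$ argument for (5) is the standard reduction used for decomposition spaces and is precisely what underlies the interpolation results in the cited works. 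The density argument in (2) via truncation in frequency followed by spatial cut-off and mollification is also the usual one. In short, you have reconstructed the folklore proofs that the paper chose to outsource to the references, and nothing in your outline is wrong or gaps.
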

The proofs of these statements can be found in \cite{f1,abl,f3,tof2}. 

\begin{thm}[Embedding: $W^{p,q}_s \hookrightarrow W^{p,q,r}_{s^\prime}$]
Let $p,q,r \in (0,\infty]$ and $s \ge 0$.\\
(\Rnum{1}): The case $r=q$.

(\Rnum{1} - \rnum{1}): The case $r=q=\infty$.
\[
W^{p,\infty}_s \hookrightarrow W^{p,\infty,\infty}_s.
\]

(\Rnum{1} - \rnum{2}): The case $r=q < \infty$.
\[
W^{p,q}_s \hookrightarrow W^{p,q,q}_s.
\]

\vspace{5mm}

\noindent
(\Rnum{2}): The case $r<q$.

(\Rnum{2} - \rnum{1}): The case $q=\infty$.
If $s > 1/r$, then
\[
W^{p,\infty}_s \hookrightarrow W^{p,\infty,r}_{s^\prime},
\]
for any $s^\prime \in (-\infty, s-1/r)$.

(\Rnum{2} - \rnum{2}): The case $q < \infty$.
If $s > (1/r - 1/q)$, then
\[
W^{p,q}_s \hookrightarrow W^{p,q,r}_{s^\prime},
\]
for any $s^\prime \in (-\infty, s-(1/r - 1/q))$.

\vspace{5mm}

\noindent
(\Rnum{3}): The case $q<r$.

(\Rnum{3} - \rnum{1}): The case $r=\infty$.
\[
W^{p,q}_s \hookrightarrow W^{p,q,\infty}_s.
\]

(\Rnum{3} - \rnum{2}): The case $r<\infty$.
\[
W^{p,q}_s \hookrightarrow W^{p,q,r}_s.
\]
\end{thm}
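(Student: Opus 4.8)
The plan is to reduce everything to a one‑dimensional question about the summation index $k_n$ and then apply elementary $\ell^r$–$\ell^q$ inequalities combined with a summable‑weight argument. The key observation is that all three norms $W^{p,q}_s$, $W^{p,q,r}_{s'}$ involve the same sequence of frequency‑localized pieces $\{\square_k f\}_{k\in\Z^n}$; only the way we aggregate over $k_n$ differs. Writing $a_{k_n} := \left(\sum_{\bar k\in\Z^{n-1}}\agl{\bar k}^{s q}|\square_{(\bar k,k_n)}f(x)|^q\right)^{1/q}$ for fixed $x$, the $W^{p,q}_s$ norm is (essentially, up to the weight $\agl{k_n}^{s}$) $\bigl\|\,\|\{a_{k_n}\}\|_{\ell^q_{k_n}}\,\bigr\|_{L^p}$ while $W^{p,q,r}_{s'}$ is $\bigl\|\,\|\{a_{k_n}\}\|_{\ell^r_{k_n}}\,\bigr\|_{L^p}$ with the inner weight downgraded from $s$ to $s'$. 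So the whole theorem becomes a statement about how $\ell^q_{k_n}$ compares to $\ell^r_{k_n}$ once one accounts for the weight on $\bar k$ versus the weight on $k_n$.

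I would organize the proof case by case exactly as the statement is split. Case (I): when $r=q$ the two norms literally coincide, since $L^p\ell^q_{k_n}\ell^q_{\bar k} = L^p\ell^q$ and the weights match ($s'=s$); both (I-i) and (I-ii) are then trivialities (including the $\infty$ endpoint, where the nested suprema collapse). Case (III), $q<r$: here no loss of regularity is needed, because $\ell^q_{k_n}\hookrightarrow\ell^r_{k_n}$ with norm $\le 1$ for $q\le r$; applying this in the $k_n$ variable pointwise in $x$ and in $\bar k$ gives $W^{p,q}_s\hookrightarrow W^{p,q,r}_s$ immediately, covering both (III-i) and (III-ii). The substantive case is (II), $r<q$. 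Here I would use the standard trick: since $r<q$, Hölder's inequality in the $k_n$ sum gives, for any real $\delta$,
\[
\|\{a_{k_n}\}\|_{\ell^r_{k_n}}
= \Bigl\|\{\agl{k_n}^{\delta} a_{k_n}\cdot \agl{k_n}^{-\delta}\}\Bigr\|_{\ell^r_{k_n}}
\le \Bigl(\sum_{k_n}\agl{k_n}^{-\delta t}\Bigr)^{1/t}\,\bigl\|\{\agl{k_n}^{\delta} a_{k_n}\}\bigr\|_{\ell^q_{k_n}},
\]
where $1/r = 1/q + 1/t$, i.e. $1/t = 1/r - 1/q$ (and in the $q=\infty$ subcase $1/t = 1/r$, recovering (II-i)). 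The geometric‑type sum $\sum_{k_n}\agl{k_n}^{-\delta t}$ converges precisely when $\delta t > 1$, i.e. $\delta > 1/t = 1/r-1/q$. Choosing $\delta$ in the admissible range $s - s' > \delta > 1/r - 1/q$ (possible exactly because $s' < s-(1/r-1/q)$ and this forces $s > 1/r-1/q$, the hypothesis of (II-ii)), one then absorbs $\agl{k_n}^{\delta}$ into the weight: $\agl{\bar k}^{s'}\agl{k_n}^{\delta}\lesssim \agl{(\bar k,k_n)}^{s'+\delta}\le \agl{(\bar k,k_n)}^{s}$ since $s'+\delta\le s$ and the homogeneous dimensions add up (here I use $s\ge 0$, or more precisely that raising the weight is harmless). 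After taking $L^p$ norms this yields exactly $W^{p,q}_s\hookrightarrow W^{p,q,r}_{s'}$.

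The main obstacle — and the only place requiring care — is the bookkeeping in case (II): matching the loss $1/r-1/q$ produced by the Hölder summation against the gap $s-s'$, and correctly combining the separate weights $\agl{\bar k}^{s'}$ and $\agl{k_n}^{\delta}$ into a single weight $\agl{(\bar k,k_n)}^{s}$ without losing constants. One must check that the elementary inequality $\agl{a}^{\alpha}\agl{b}^{\beta}\lesssim \agl{(a,b)}^{\alpha+\beta}$ applies with the right signs (it does for $\alpha,\beta\ge 0$, which is why the hypotheses $s\ge 0$ and $\delta>0$ enter), and in the borderline subcase one sees why the strict inequality $s>1/r-1/q$ (rather than $\ge$) is needed — the weight sum $\sum_{k_n}\agl{k_n}^{-\delta t}$ diverges at $\delta t = 1$, so equality is genuinely excluded, which is also the source of the open endpoint "for any $s'\in(-\infty, s-(1/r-1/q))$" in the statement. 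Everything else (the $L^p$ outer norm, the $\bar k$ sum) is untouched by these manipulations and passes through verbatim.
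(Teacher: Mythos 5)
Your proposal is correct and follows essentially the same route as the paper: a case split in which $r=q$ is immediate, $q<r$ follows from $\ell^q_{k_n}\hookrightarrow\ell^r_{k_n}$ together with $\agl{\bar k}\le\agl{k}$ and $s\ge 0$, and the substantive case $r<q$ is handled by H\"older in $k_n$ against a weight $\agl{k_n}^{-\delta}$ that is $\ell^t$-summable exactly when $\delta>1/r-1/q$, followed by absorbing $\agl{\bar k}^{s'}\agl{k_n}^{\delta}$ into $\agl{k}^{s}$ (the paper's $\alpha/r$ is your $\delta$, and its reduction to $s'\ge 0$ is the same monotonicity remark you make). The only imprecision is in case (I): the two norms do not ``literally coincide,'' since the anisotropic norm carries the inner weight $\agl{\bar k}^{s}$ rather than $\agl{k}^{s}$, so even there one needs the one-sided inequality $\agl{\bar k}^{s}\le\agl{k}^{s}$ from $s\ge0$ — the same device you already use in the other cases.
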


\begin{proof}
For part (I), it suffice to show the following estimates.

(\Rnum{1}-\rnum{1}): 
\[
\dis \sup_{k_n} \sup_{\bar{k}} \lan \bar{k} \ran^s |\sq_kf| \le \sup_{k} \lan k \ran^s |\sq_kf|.
\]

\medskip

(\Rnum{1} - \rnum{2}):
\[
\dis \left(\sum_{k_n} \sum_{\bar{k}} \lan \bar{k} \ran^{sq} |\sq_k f|^q  \right)^{1/q} \le \left(\sum_k \lan k \ran^{sq} |\sq_k f|^q \right)^{1/q}.
\]

\medskip

(\Rnum{2} - \rnum{1}): Let $s' := s - 1/r - \vep,\ (\vep >0)$.
We may assume that $s^\prime \ge 0$.
\[
\dis \left(\sum_{k_n} \sup_{\bar{k}} \lan \bar{k} \ran^{s^\prime r} |\sq_k f|^{r} \right)^{1/r} \le \left(\sup_k \lan k \ran^{s} |\sq_k f| \right) \times \left(\sum_{k_n} \sup_{\bar{k}} \lan k \ran^{-sr} \lan \bar{k} \ran^{s^\prime r} \right)^{1/r} 
\]
The last term is equivalent to
\[
\dis \left(\sum_{m \in \Z} \left(\sup_{t \ge 1} \left(\dfrac{1}{t + |m|} \right)^s t^{s^\prime} \right)^r \right)^{1/r} \le \left(\sum_{m \in \Z} \left(\dfrac{1}{1 + |m|} \right)^{(s - s^\prime) r} \sup_{t \ge 1} t^{(s^\prime - s^\prime) r} \right)^{1/r} < \infty,
\]
here $(s - s^\prime)r = 1 + \vep r >1$ and $s^\prime \ge 0$ have been used.

\medskip

(\Rnum{2} - \rnum{2}): Let $s^\prime := s - (1/r - 1/q) - \vep,\ (\vep >0)$.
It suffice to show the embedding in the case $s^\prime \ge 0$.
Remark that $q/r \in (1,\infty)$ and $(q/r)^\prime = 1/(r(1/r - 1/q))$.
Let $\alpha := 1 - r/q + \vep r$.

\begin{eqnarray*}
&&\dis \left[\sum_{k_n} \left\{\sum_{\bar{k}} \lan \bar{k} \ran^{s^\prime q} |\sq_k f|^q \right\}^{r/q} \right]^{1/r}\\ && = \left[\sum_{k_n} \left\{\sum_{\bar{k}} \lan \bar{k} \ran^{s^\prime q} \lan k_n \ran^{\alpha q/r} |\sq_k f|^q \right\}^{r/q} \lan k_n \ran^{- \alpha} \right]^{1/r} \\
& & \le \left[\left\{\sum_{k_n} \sum_{\bar{k}} \lan \bar{k} \ran^{s^\prime q} \lan k_n \ran^{\alpha q/r} |\sq_k f|^q \right\}^{r/q}\times \left(\sum_{k_n} \lan k_n \ran^{- \alpha (q/r)^\prime} \right)^{1/(q/r)^\prime} \right]^{1/r} \\
 && \lesssim \left(\sum_k \lan \bar{k} \ran^{s^\prime q} \lan k_n \ran^{\alpha q/r} |\sq_k f|^q \right)^{1/q} \\
& & = \left\{\sum_k \lan k \ran^{sq} |\sq_k f|^q \left(\lan \bar{k} \ran^{s^\prime} \lan k_n \ran^{\alpha /r} \lan k \ran^{-s} \right)^q \right\}^{1/q} \\
& & \le \left[\sup_k \lan \bar{k} \ran^{s^\prime} \lan k_n \ran^{\alpha /r} \lan k \ran^{-s} \right] \times \left\{\sum_k \lan k \ran^{sq} |\sq_k f|^q \right\}^{1/q}.
\end{eqnarray*}
Here, we have used $\alpha (q/r)^\prime = 1+ \dfrac{\vep}{1/r - 1/q} >1$.
Because $\alpha/r = 1/r - 1/q + \vep = s - s^\prime,\ s - s^\prime \ge 0$ and $s^\prime \ge 0$,
\begin{align*}
\lan \bar{k} \ran^{s^\prime} \lan k_n \ran^{\alpha /r} \lan k \ran^{-s} = \left(\dfrac{\lan k_n \ran}{\lan k \ran} \right)^{s - s^\prime} \left(\dfrac{\lan \bar{k} \ran}{\lan k \ran} \right)^{s^\prime} \lesssim 1.
\end{align*}

\medskip

(\Rnum{3} - \rnum{1}):
\[
\dis \sup_{k_n} \left(\sum_{\bar{k}} \lan \bar{k} \ran^{sq} |\sq_k f|^q \right)^{1/q} \le \left(\sum_k \lan k \ran^{sq} |\sq_k f|^q \right)^{1/q}.
\]
Here, we have used $s \ge 0$.

\medskip

(\Rnum{3} - \rnum{2}):
Using the embedding $\ell^q \hookrightarrow \ell^r$,
\begin{align*}
\dis \left\{\sum_{k_n} \left(\sum_{\bar{k}} \lan \bar{k} \ran^{sq} |\sq_k f|^q \right)^{r/q} \right\}^{1/r} & \le \left(\sum_k \lan\bar{k} \ran^{sq} |\sq_k f|^q \right)^{1/q} \\
& \le  \left(\sum_k \lan k \ran^{sq} |\sq_k f|^q \right)^{1/q}.
\end{align*}
In the last inequality, we need $s \ge 0$.

\end{proof}

\begin{lem}[Triebel, \cite{tri}]\label{tri}
Let $0<p<\infty$ and $0<q\leq\infty$. Let $\Omega=\{\Omega_k\}_{k\in\mathbb{Z}^n}$ be a sequence of compact subsets of $\mathbb{R}^n.$ Let $d_k$ be the diameter of $\Omega_k.$ If $0<r<\min(p,q)$, then there exist a constant $c$ such that
$$|| \sup_{z\in\mathbb{R}^n} \dfrac{|f_k(\cdot-z)|}{1+|d_kz|^{n/r}}  ||_{L^p(\ell^q)}\leq c||  f_k   ||_{L^p(\ell^q)}$$
holds for all $f\in L^p_{\Omega}(\ell^q)$, where $f=\{f_k\}$, $||  f_k   ||_{L^p(\ell^q)}=||\ ||f_k(\cdot)||_{\ell^q}||_{L^p}$ and
$$L^p_{\Omega}(\ell^q)=\{f \ | \ f=\{f_k\}_{k\in\mathbb{Z}^n}\subset\mathcal{S}', \supp \mathcal{F}f_k\subset\Omega_k, \and\ ||f_k||_{L^p(\ell^q)}<\infty\}.$$
\end{lem}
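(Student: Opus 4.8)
The approach is to deduce the inequality from the classical Peetre maximal function estimate combined with the Fefferman--Stein vector-valued maximal inequality; since this is exactly Triebel's lemma, I only indicate the structure. \textbf{Reductions.} For each $k$ choose $\xi_k\in\Omega_k$ and write $f_k(x)=e^{2\pi i x\cdot\xi_k}g_k(x)$, where $\widehat{g_k}(\xi)=\widehat{f_k}(\xi+\xi_k)$ is supported in the closed ball $\overline{B(0,d_k)}$ and $|g_k|=|f_k|$; since the asserted inequality involves only $|f_k(\cdot-z)|$, we may assume $\supp\widehat{f_k}\subset\overline{B(0,d_k)}$, and the terms with $d_k=0$ are trivial because then $|f_k|$ is constant. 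Fix $\psi\in\mathcal{S}(\rn)$ with $\widehat\psi\equiv 1$ on $\overline{B(0,1)}$ and $\supp\widehat\psi\subset B(0,2)$, and set $\psi^{(d)}(x):=d^n\psi(dx)$, so that $\widehat{\psi^{(d)}}\equiv 1$ on $\overline{B(0,d)}$ and hence $f_k=f_k*\psi^{(d_k)}$, with the $d$-uniform bounds $|\psi^{(d)}(w)|\le c_M\,d^n(1+d|w|)^{-M}$ for every $M$.

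\textbf{Pointwise Peetre estimate.} Since $r<\min(p,q)$, fix $r_1$ with $r<r_1<\min(p,q)$ and put $N:=n/r$, so that $Nr_1>n$. Set $f_k^{*}(x):=\sup_{z\in\rn}|f_k(x-z)|\,(1+d_k|z|)^{-N}$. Starting from $f_k=f_k*\psi^{(d_k)}$, one uses the submultiplicativity $1+d_k|z+w|\le(1+d_k|z|)(1+d_k|w|)$ to get $|f_k(x-z-w)|\le f_k^{*}(x)(1+d_k|z|)^N(1+d_k|w|)^N$, splits $|f_k(x-z-w)|=|f_k(x-z-w)|^{1-r_1}|f_k(x-z-w)|^{r_1}$ and bounds the first factor by the preceding estimate, chooses $M=N$, changes variables $u=z+w$, and uses $(1+d_k|u-z|)^{-1}\le(1+d_k|z|)(1+d_k|u|)^{-1}$. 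Dividing by $(1+d_k|z|)^N$ and taking the supremum in $z$ leaves
\[
f_k^{*}(x)\ \lesssim\ \big(f_k^{*}(x)\big)^{1-r_1}\int_{\rn}\frac{|f_k(x-u)|^{r_1}\,d_k^n}{(1+d_k|u|)^{Nr_1}}\,du\ \lesssim\ \big(f_k^{*}(x)\big)^{1-r_1}\,M(|f_k|^{r_1})(x),
\]
where $M$ is the Hardy--Littlewood maximal operator and the last step uses $Nr_1>n$ and a dyadic-annulus decomposition, with constant independent of $d_k$. Provided $f_k^{*}(x)<\infty$ this yields $f_k^{*}(x)\lesssim\big(M(|f_k|^{r_1})(x)\big)^{1/r_1}$, and since $1+t^{n/r}\gtrsim(1+t)^{n/r}$ with a constant depending only on $n,r$, also
\[
\sup_{z\in\rn}\frac{|f_k(x-z)|}{1+|d_kz|^{n/r}}\ \lesssim\ f_k^{*}(x)\ \lesssim\ \big(M(|f_k|^{r_1})(x)\big)^{1/r_1}.
\]

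\textbf{Conclusion.} Raising to the power $p$, passing to the $L^{p/r_1}(\ell^{q/r_1})$ norm and invoking the Fefferman--Stein maximal inequality (legitimate because $p/r_1>1$ and $q/r_1>1$, with the usual modification $\sup_k Mh_k\le M(\sup_k h_k)$ when $q=\infty$), we obtain
\[
\Big\|\sup_{z}\frac{|f_k(\cdot-z)|}{1+|d_kz|^{n/r}}\Big\|_{L^p(\ell^q)}\ \lesssim\ \big\|M(|f_k|^{r_1})\big\|_{L^{p/r_1}(\ell^{q/r_1})}^{1/r_1}\ \lesssim\ \big\||f_k|^{r_1}\big\|_{L^{p/r_1}(\ell^{q/r_1})}^{1/r_1}\ =\ \big\|f_k\big\|_{L^p(\ell^q)},
\]
which is the claim.

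\textbf{Main obstacle.} The delicate point is the pointwise Peetre estimate of the second step: beyond the bookkeeping with the weights $(1+d_k|\cdot|)$, one must justify that $f_k^{*}(x)<\infty$ a.e. for a band-limited $f_k\in\mathcal{S}'$ (such an $f_k$ is a smooth function of at most polynomial growth; the standard remedy is to prove the estimate first for $f_k\in\mathcal{S}$, or for a truncation carrying extra decay, and then pass to the limit by Fatou), and one must check that every constant is independent of $k$, i.e. of $d_k$. The latter is where the homogeneity of the dilates $\psi^{(d)}$ and of the kernels $d^n(1+d|\cdot|)^{-Nr_1}$ is used; alternatively one may first rescale each $f_k$ to reduce to $d_k\equiv 1$.
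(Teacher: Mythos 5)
The paper does not prove this lemma at all: it is quoted from Triebel's book \cite{tri} and used as a black box, so there is no in-paper argument to compare with. Your sketch is the standard proof from that source --- modulation to reduce to $\supp\widehat{f_k}\subset\overline{B(0,d_k)}$, the reproducing identity $f_k=f_k*\psi^{(d_k)}$, the Peetre maximal function, the pointwise bound by $\big(M(|f_k|^{r_1})\big)^{1/r_1}$, and the Fefferman--Stein inequality on $L^{p/r_1}(\ell^{q/r_1})$ --- and the choice $Nr_1>n$, the $q=\infty$ modification, and the finiteness caveat for $f_k^{*}$ are all handled appropriately.

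One step fails as written, though. The lemma permits $r\ge 1$ (e.g.\ $p=q=3$, $r=2$), and then every admissible $r_1\in(r,\min(p,q))$ has $1-r_1<0$; in that case ``bounding the first factor $|f_k(x-z-w)|^{1-r_1}$ by the preceding estimate'' is illegitimate, because raising the upper bound $|f_k(x-z-w)|\le f_k^{*}(x)(1+d_k|z|)^{N}(1+d_k|w|)^{N}$ to a negative power reverses the inequality, and an upper bound on $|f_k|^{1-r_1}$ would require a lower bound on $|f_k|$, which is unavailable since $f_k$ may vanish. The self-improvement trick is intrinsically an $r_1\le 1$ device. The standard repair for $r_1\ge1$ is to skip the trick and apply H\"older's inequality with exponent $r_1$ directly to $|f_k(x-z)|\le\int|f_k(x-u)|\,|\psi^{(d_k)}(u-z)|\,du$, taking the kernel decay $M=Nr_1>n$ and using $(1+d_k|u-z|)^{-M}\le(1+d_k|z|)^{M}(1+d_k|u|)^{-M}$; this yields $f_k^{*}(x)\lesssim\big(M(|f_k|^{r_1})(x)\big)^{1/r_1}$ without ever dividing by $\big(f_k^{*}\big)^{1-r_1}$. (A smaller imprecision: when $d_k=0$, $f_k$ is a character times a polynomial, so $|f_k|$ need not be constant; those terms are trivial rather because $f_k\in L^p$ with $p<\infty$ forces $f_k\equiv 0$.)
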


\begin{definition}[\textit{Maximal Functions}]
Let $b>0$ and $f\in\mathcal{S}$. Then
\begin{equation}\label{maxl}
\square^{\ast}_kf(x):= \sup_{y\in\mathbb{Z}^n} \dfrac{|\square_kf(x-y)|}{1+|y|^{b}}\quad x\in\mathbb{R}^n,k\in\mathbb{Z}^n
\end{equation}
\end{definition}

\begin{pro}\label{pro}
Let $0<p<\infty$ and $0<q\leq\infty$, $b>\dfrac{n}{\min (p,q)}$. Then
\begin{equation}
||\left(\sum_{k\in\mathbb{Z}^{n}}\agl{k}^{sq}|\square^{\ast}_kf|^q\right)^{1/q}||_{L^p(\mathbb{R}^n)}
\end{equation}
\and \begin{equation}
||\left(\sum_{k_n\in\mathbb{Z}}\left(\sum_{\bar{k}\in\mathbb{Z}^{n-1}}\agl{\bar{k}}^{sq}|\square^{\ast}_kf|^q\right)^{r/q}\right)^{1/r}||_{L^p(\mathbb{R}^n)}
\end{equation}
are equivalent norms in $W^{p,q}_{s}(\mathbb{R}^n)$ and $W^{p,q,r}_{s}(\mathbb{R}^n)$, respectively.
\end{pro}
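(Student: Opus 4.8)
The plan is to prove the two norm equivalences by establishing two-sided pointwise-in-sequence bounds between $\square_k f$ and $\square^\ast_k f$, and then invoking Lemma~\ref{tri}. The easy direction is immediate: since $0 \in \mathbb{Z}^n$, taking $y=0$ in the supremum defining $\square^\ast_k f(x)$ gives $|\square_k f(x)| \le \square^\ast_k f(x)$ for every $x$ and every $k$, so the $\square^\ast_k$-norms dominate the $W^{p,q}_s$ and $W^{p,q,r}_s$ norms trivially. All the work is in the reverse inequality.

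For the reverse direction, the key observation is that $\varphi_k$ is supported in the cube $Q_k$ of side comparable to $1$, so $\mathrm{supp}\,\mathcal{F}(\square_k f) \subset \Omega_k := \mathrm{supp}\,\varphi_k$ is a compact set of diameter $d_k \sim 1$, uniformly in $k$. Hence the discrete maximal function $\square^\ast_k f(x) = \sup_{y \in \mathbb{Z}^n} |\square_k f(x-y)|/(1+|y|^b)$ is controlled by its continuous analogue: for $x - y$ with $y \in \mathbb{Z}^n$ one writes $y = \lfloor y \rfloor$-type rounding is unnecessary since $y$ is already an integer, but the point is $1 + |y|^b \gtrsim 1 + |d_k y|^{n/r}$ once we pick $r$ with $n/r \le b$, i.e. $r \ge n/b$; the hypothesis $b > n/\min(p,q)$ lets us choose $r \in (n/b, \min(p,q))$, so that simultaneously $n/r < b$ and $r < \min(p,q)$, which is exactly what Lemma~\ref{tri} requires. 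Then
\[
\square^\ast_k f(x) = \sup_{y \in \mathbb{Z}^n} \frac{|\square_k f(x-y)|}{1+|y|^b} \lesssim \sup_{z \in \mathbb{R}^n} \frac{|\square_k f(x-z)|}{1+|d_k z|^{n/r}},
\]
because the integer lattice supremum is bounded by the supremum over all of $\mathbb{R}^n$ and $1+|d_k z|^{n/r} \lesssim 1 + |z|^{n/r} \lesssim 1 + |y|^b$ for the relevant comparisons (using $d_k \sim 1$). Applying Lemma~\ref{tri} to the sequence $f_k = \langle k \rangle^s \square_k f$ (for the first norm) gives
\[
\Big\| \Big(\sum_k \langle k\rangle^{sq} |\square^\ast_k f|^q\Big)^{1/q}\Big\|_{L^p} \lesssim \Big\| \Big(\sum_k \langle k\rangle^{sq} |\square_k f|^q\Big)^{1/q}\Big\|_{L^p} = \|f\|_{W^{p,q}_s},
\]
which together with the trivial bound yields the first equivalence.

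For the second (anisotropic) equivalence, the argument is the same except that the mixed-norm space $L^p \ell^r_{k_n} \ell^q_{\bar k}$ replaces $L^p(\ell^q)$. The plan is to check that Lemma~\ref{tri} — or rather its proof via the Peetre–Fefferman–Stein vector-valued maximal inequality — goes through verbatim with the inner $\ell^q_{\bar k}$-$\ell^r_{k_n}$ iterated norm in place of $\ell^q$, since the key tool is the boundedness of the Hardy–Littlewood maximal operator on $L^{p/\rho}(\ell^{q/\rho})$ and on iterated mixed norms for $\rho < \min(p,q,r)$; here one uses $r < \min(p,q)$ already, and the diameters $d_k \sim 1$ are still uniform. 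Alternatively one can absorb the $\langle k_n\rangle$ weights into $\langle \bar k\rangle^s$ is not possible, so one applies the maximal estimate at fixed $k_n$ and sums. The main obstacle, and the only non-routine point, is verifying that the mixed-norm version of Triebel's maximal lemma holds with the same constant behavior; this is standard (it follows from the mixed-norm Fefferman–Stein inequality, which holds because each exponent exceeds $\rho$), but it must be stated, and one should be slightly careful that the weight $\langle \bar k\rangle^s$ is well-behaved under translation — since $\langle \bar k \rangle^s |\square_k f(x - z)|$ has Fourier support still in $\Omega_k$, the weight is a harmless constant for each fixed $k$ and causes no trouble.
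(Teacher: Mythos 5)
Your proposal takes essentially the same route as the paper, whose entire proof is the one-line remark that the result is a direct consequence of Lemma~\ref{tri} applied to $f_k=\square_kf$: you supply exactly the missing details (the trivial bound from $y=0$, the comparison of the discrete maximal function with the continuous one in Lemma~\ref{tri} using $d_k\sim 1$, and the choice of auxiliary exponent in $(n/b,\min(p,q))$). You are in fact more careful than the paper on the anisotropic case, where a mixed-norm $L^p\ell^r_{k_n}\ell^q_{\bar k}$ version of Lemma~\ref{tri} is genuinely needed; just be aware that the iterated Fefferman--Stein argument there really requires the auxiliary exponent to lie below $\min(p,q,r)$ (so, strictly, $b>n/\min(p,q,r)$), a point on which your sentence ``one uses $r<\min(p,q)$ already'' conflates the anisotropic index $r$ with the auxiliary exponent, and which the paper's stated hypothesis $b>n/\min(p,q)$ also glosses over.
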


The proof is a direct consequence of Lemma \ref{tri}, taking $f_k=\square_kf$. See also \cite[Proposition]{trimod}.

\section{Proof of the main results}

First,we narrate the idea of the proof. We give an equivalent formulation for $\square_{\bar{k}}(\T f)(\bar{x})$, a function in $\rne$, via some $\square_{\bar{k},l}f(\bar{x},0)$ a function in $\rn.$ Then we compute for pointwise estimates between the corresponding $\ell^q$ norms and $\ell^r_{k_n}\ell^q_{\bar{k}}$ norms for cases $0<q<1$ and $1\leq q<\infty,$ separately. Finally, taking $L^p(\rne)$ norms and using our equivalent norms in Proposition  (\ref{pro}), we arrive to our conclusion.

We denote $\mathcal{F}_{\bar{x}}(\mathcal{F}^{-1}_{\bar{\xi}})$ the partial (inverse) Fourier transform on $\bar{x}$ $(\bar{\xi})\in\mathbb{R}^{n-1}$. Write $\{\varphi_{\bar{k}}\}_{\bar{k}\in\Z^{n-1}}$ as versions of ($\ref{vph}$) in $\rne$. By the support property of $\varphi_{\bar{k}}$, we observe
\begin{align}\label{1}
\square_{\bar{k}}(\T f)(\bar{x})&=(\mathcal{F}^{-1}_{\bar{\xi}}\varphi_{\bar{k}}\mathcal{F}_{\bar{x}})(\T f)(\bar{x})\nonumber\\
&=\sum_{l\in\mathbb{Z}^n}\{\mathcal{F}^{-1}_{\bar{\xi}}\varphi_{\bar{k}}\mathcal{F}_{\bar{x}}[(\mathcal{F}^{-1}\varphi_l\mathcal{F}f)(\bar{y},0)]\}(\bar{x})\nonumber\\
&=\sum_{l\in\mathbb{Z}^n}\chi_{(|\bar{k}-\bar{l}|\leq1)}\left(\mathcal{F}^{-1}\psi_{\bar{k},l}\mathcal{F}f\right)(\bar{x},0)\nonumber\\
&=\sum_{l\in\mathbb{Z}^n}\chi_{(|\bar{k}-\bar{l}|\leq1)}\square_{\bar{k},l}f(\bar{x},0),
\end{align}
where $\psi_{\bar{k},l}(\xi)=\varphi_{\bar{k}}(\bar{\xi})\varphi_l(\xi), l=(\bar{l},l_n),$ and $\square_{\bar{k},l}f:=\mathcal{F}^{-1}\psi_{\bar{k},l}\mathcal{F}f$. Note that the left-hand side is a function in $\mathbb{R}^{n-1}$ while the right-hand side is a function in $\mathbb{R}^{n}$.

Recall our maximal function (\ref{maxl}) and take $y_1=y_2=\cdots =y_{n-1}=0, y_n=x_n$ we have for $|x_n|\leq1,$
\begin{equation}\label{max}
|\square_kf(\bar{x},0)|\lesssim \square^{\ast}_kf(x).
\end{equation}

\begin{proof}[Proof of Theorem 1.1]
We start by taking the $\ell^q$-norm of (\ref{1}). We write,
\begin{align}\label{step}
\left(\sum_{\bar{k}\in\mathbb{Z}^{n-1}}\agl{\bar{k}}^{sq}|\square_{\bar{k}}(\T f)(\bar{x})|^q\right)^{1/q}&=\left(\sum_{\bar{k}\in\mathbb{Z}^{n-1}}\agl{\bar{k}}^{sq}\left(\sum_{l\in\mathbb{Z}^n}\chi_{(|\bar{k}-\bar{l}|\leq1)}\square_{\bar{k},l}f(\bar{x},0)\right)^q\right)^{1/q}.
\end{align}

For $0<q<1,$ we estimate (\ref{step}) by

\begin{align}
\left(\sum_{\bar{k}\in\mathbb{Z}^{n-1}}\agl{\bar{k}}^{sq}|\square_{\bar{k}}(\T f)(\bar{x})|^q\right)^{1/q}&\lesssim\left(\sum_{l\in\mathbb{Z}^n}\sum_{\bar{k}\in\mathbb{Z}^{n-1}}\agl{\bar{l}}^{sq}\chi_{(|\bar{k}-\bar{l}|\leq1)}|\square_{\bar{k},l}f(\bar{x},0)|^q\right)^{1/q}\nonumber\\
&=\left(\sum_{l_n\in\mathbb{Z}^{}}\sum_{\bar{l}\in\mathbb{Z}^{n-1}}\agl{\bar{l}}^{sq}\sum_{\bar{k}\in\mathbb{Z}^{n-1}}\chi_{(|\bar{k}-\bar{l}|\leq1)}|\square_{\bar{k},l}f(\bar{x},0)|^q\right)^{1/q}\label{9}
\end{align}
Note that $\sum_{\bar{k}\in\mathbb{Z}^{n-1}}\chi_{(|\bar{k}-\bar{l}|\leq1)}|\square_{\bar{k},l}f(\bar{x},0)|^q=\sum_{j=1}^{n-1}|\square_{\overline{l\pm e_j},l}f(\bar{x},0)|^q$, where $e_j$ is the $j^{th}$ column of the identity matrix. In the sequel, it suffice to consider only the case $j=1.$ Moreover, we write $\widetilde{\square}_lf:=\square_{\overline{l\pm e_1},l}f$ for some $\psi_l$ satisfying (\ref{vph}). Using (\ref{max}) we have,

\begin{align}
\left(\sum_{l_n\in\mathbb{Z}^{}}\sum_{\bar{l}\in\mathbb{Z}^{n-1}}\agl{\bar{l}}^{sq}|\square_{\overline{l\pm e_1},l}f(\bar{x},0)|^q\right)^{1/q}
&\lesssim\left(\sum_{l_n\in\mathbb{Z}^{}}\sum_{\bar{l}\in\mathbb{Z}^{n-1}}\agl{\bar{l}}^{sq}|\widetilde{\square}_l^{\ast }f(\bar{x},x_n)|^q\right)^{1/q}.\label{kk}
\end{align}
Combining (\ref{9}) and (\ref{kk}), then taking the $L^p(\mathbb{R}^{n-1})$-norm and raising to $p$-th power gives,
\begin{align*}
||f(\bar{x},0)||^p_{W^{p,q}_s(\mathbb{R}^{n-1})}&\lesssim||\left(\sum_{l\in\mathbb{Z}^{n}}\agl{\bar{l}}^{sq}\widetilde{\square}_l^{\ast q}f(\bar{x},x_n)\right)^{1/q}||^p_{L^p(\mathbb{R}^{n-1})}
\end{align*}
Integrating over $x_n\in [0,1]$,
\begin{align*}
||f(\bar{x},0)||_{W^{p,q}_s(\mathbb{R}^{n-1})}&\lesssim||\left(\sum_{l\in\mathbb{Z}^{n}}\agl{\bar{l}}^{sq}\widetilde{\square}_l^{\ast q}f(\bar{x},x_n)\right)^{1/q}||_{L^p(\mathbb{R}^{n})}\\
&\lesssim ||f||_{W^{p,q,q}_s(\mathbb{R}^{n})}.
\end{align*}\medskip
Note that the last inequality follows from Proposition 2.1.\medskip

For $1\leq q\leq\infty$, we use Minkowski's inequality to give an upper bound of (\ref{step}) as follows,
\begin{align}
\left(\sum_{\bar{k}\in\mathbb{Z}^{n-1}}\agl{\bar{k}}^{sq}|\square_{\bar{k}}(\T f)(\bar{x})|^q\right)^{1/q}&\lesssim\left(\sum_{\bar{k},\bar{l}\in\mathbb{Z}^{n-1}}\agl{\bar{k}}^{sq}\left(\sum_{l_n\in\mathbb{Z}}\chi_{(|\bar{k}-\bar{l}|\leq1)}\square_{\bar{k},l}f(\bar{x},0)\right)^q\right)^{1/q}\nonumber\\
&\lesssim \sum_{l_n\in\mathbb{Z}} \left(\sum_{\bar{k},\bar{l}\in\mathbb{Z}^{n-1}}\agl{\bar{k}}^{sq}\chi_{(|\bar{k}-\bar{l}|\leq1)}|\square_{\bar{k},l}f(\bar{x},0)|^q\right)^{1/q}\nonumber\\
&\lesssim\sum_{l_n\in\mathbb{Z}} \left(\sum_{\bar{l}\in\mathbb{Z}^{n-1}}\agl{\bar{l}}^{sq}|\widetilde{\square}_l^{\ast}f(\bar{x},x_n)|^q\right)^{1/q}.\label{jj}
\end{align}
Repeating the arguments above on  (\ref{jj}) gives us the estimate,
\begin{align*}
||f(\bar{x},0)||_{W^{p,q}_s(\mathbb{R}^{n-1})}\lesssim ||f||_{W^{p,q,1}_s(\mathbb{R}^{n})}.
\end{align*}
Hence, we arrive to our desired estimates.\medskip

Let $\eta'\in\Sw(\mathbb{R})$ be a function with $\supp\eta'\subset(-1/4,1/4)$ and $(\mathcal{F}^{-1}_{\xi_n})\eta'(0)=1.$ For any $f\in W^{p,q}_s(\mathbb{R}^{n-1})$, we define
$g(x)=(\T^{-1}f)(x):=\left[(\mathcal{F}^{-1}_{\xi_n})\eta'(x_n)\right]f(\bar{x}).$ We easily see that $g(\bar{x},0)=f(\bar{x})$ and $\square_kg=0$ when $|k_n|\geq3$. Moreover, we can decompose $\square_kg=\square_{\bar{k}}f\cdot \square_{k_n} (\mathcal{F}^{-1}_{\xi_n}\eta')$ due to the way $\varphi_k$ is defined in (\ref{vph}). Now we do an estimate,

\begin{align*}
||g||_{W^{p,q,q\wedge1}_s(\mathbb{R}^{n})}&=|| \left(\sum_{k_n\in\mathbb{Z}}\left(\sum_{\bar{k}\in\mathbb{Z}^{n-1}}\agl{\bar{k}}^{sq}|\square_kg|^q\right)^{q\wedge1/q}\right)^{1/ q\wedge1}  ||_{L^p(\mathbb{R}^{n})}\\
&= || \left(\sum_{\bar{k}\in\mathbb{Z}^{n-1}}\agl{\bar{k}}^{sq}|\square_{\bar{k}}f|^q\right)^{1/q}\left(\sum_{|k_n|\leq2}|\square_{k_n} (\mathcal{F}^{-1}_{\xi_n}\eta')|^{1\wedge q}\right)^{1/ q\wedge1}||_{L^p(\mathbb{R}^{n})}\\
&\lesssim ||f||_{W^{p,q}_s(\mathbb{R}^{n-1})}.
\end{align*}

Thus, $\mathbb{T}^{-1}:W^{p,q}_s(\mathbb{R}^{n-1})\rightarrow W^{p,q,q\wedge1}_s(\mathbb{R}^{n})$.

\end{proof}

As the end of this paper, we discuss the optimality of Corollary 1.1. We recall the counterexample given in  \cite{fwh}. For $1<p,q<\infty$, there exist a function which shows
$$\T:M^{p,q}_{1/q'}(\rn)\not\to M^{p,q}_{0}(\rne).$$
 Since $M^{q,q}=W^{q,q},$ we also have $\T:W^{q,q}_{1/q'}(\rn)\not\to W^{q,q}_{0}(\rne)$. Hence, Corollary 1.1 is sharp for $p=q, 1<p,q<\infty$ (refer to FIGURE 1). We now claim that it is also sharp for all $1<p,q<\infty$. Contrary to our claim, suppose $s=1/q'$ implies $\T W^{p,q}_{s}(\rn)=W^{p,q}(\rne)$. Then, by interpolation with the estimate for a point $Q(p_1,q_1)$ with $s=1/q_1'$, one would obtain an improvement for the segment connecting $P(p,q)$ and $Q(p_1,q_1)$ (refer to FIGURE 2), which is not possible. 
 
\begin{figure}
\begin{tikzpicture}[scale=2]
    \draw [<->,thick] (0,2) node (yaxis) [left] {$1/q$}
        |- (2,0) node (xaxis) [below] {$1/p$};
\path[draw] (0,1) node[left] {1} -- (1,1)--(2,1) ;

\node[below]  at (1,0){$1$};

\node[below] at (0,0) {0};

\node at (1,1.5) {$0$};
\node at (1,.65) {$1/q'$};
\end{tikzpicture}
\quad
\quad
\begin{tikzpicture}[scale=2]
    \draw [<->,thick] (0,2) node (yaxis) [left] {$1/q$}
        |- (2,0) node (xaxis) [below] {$1/p$};
\path[draw] (0,1) node[left] {1} -- (1,1)--(2,2) ;
\path[draw] (1,0)--(1,1);
\node[below]  at (1,0){$1$};

\node[below] at (0,0) {0};

\node at (.5,1.5) {$0$};
\node at (.5,.65) {$1/q'$};
\node at (1.5,.65) {$1/p-1/q$};
\end{tikzpicture}\caption{Comparison between the critical regularity index $s$ for $\T W^{p,q}_{s+\epsilon}(\rn)=W^{p,q}(\rne)$ (left) and $\T M^{p,q}_{s+\epsilon}(\rn)=M^{p,q}(\rne)$ (right).}
\label{fig1}
\end{figure}

\begin{figure}

\begin{tikzpicture}[scale=2]
    \draw [<->,thick] (0,2) node (yaxis) [left] {$1/q$}
        |- (2,0) node (xaxis) [below] {$1/p$};
\path[draw] (0,1.5) node[left] {1} -- (1.5,1.5)--(2,2) ;
\path[draw] (1.5,0)--(1.5,1.5);
\node[below]  at (1,0){$1$};
\path[draw] (0,0)--(1.5,1.5);

\node[below] at (0,0) {0};

\node at (.5,1.75) {$0$};
\path[draw] (.5,1)node[left] {P}--(1,.55)node[right] {Q};

\end{tikzpicture}\caption{Contradiction argument using interpolation.}
\label{fig1}
\end{figure}
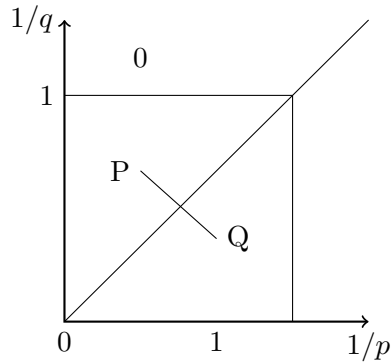

\section*{Acknowledgement}
This work was supported by JSPS, through "Program to Disseminate Tenure Tracking System".
The second author was also partially supported by JSPS, through Grant-in-Aid
for Young Scientists (B) (No. 15K20919).
\bibliographystyle{abbrv}
\bibliography{research}

\end{document}